\documentclass[11pt]{article}
\usepackage{currfile}
\usepackage{amsmath,amsthm,verbatim,amssymb,amsfonts,amscd,graphicx}
\usepackage{graphicx,tikz,caption,subfig}
\usepackage{mathrsfs}
\usepackage{bm}
\usepackage{float}
\usetikzlibrary{calc}
\usepackage[hidelinks]{hyperref}

\usetikzlibrary{patterns}
\usetikzlibrary{arrows, shapes, positioning}
\usetikzlibrary{decorations.markings}
\tikzstyle arrowstyle=[scale=1]
\tikzstyle directed=[postaction={decorate,decoration={markings, mark=at position 0.5 with {\arrow[arrowstyle]{stealth}}}}]
\tikzstyle redirected=[postaction={decorate,decoration={markings, mark=at position 0.5 with {\arrow[arrowstyle]{stealth}}}}]
\usepackage{xcolor}
\usepackage{needspace}
\newif\ifshowchanges
\showchangestrue

\hypersetup{pdftitle={Vizing's theorem for signed multigraphs},
  pdfauthor={You Lu, Jingru Zhao, Li Zhang}}
\usepackage{enumitem}
\setenumerate[1]{itemsep=0pt,partopsep=0pt,parsep=\parskip,topsep=5pt}
\setitemize[1]{itemsep=4pt,partopsep=0pt,parsep=\parskip,topsep=5pt}
\setdescription{itemsep=0pt,partopsep=0pt,parsep=\parskip,topsep=5pt}

\newcommand{\miss}[2]{\overline C_{#1}(#2)}
\newcommand{\col}[3]{#1(h_{#2}^{#3})}

\newtheorem{theorem}{Theorem}[section]
\newtheorem{corollary}[theorem]{Corollary}
\newtheorem{definition}[theorem]{Definition}

\newtheorem{lemma}[theorem]{Lemma}

\newtheorem{Claim}{Claim}

\newcommand{\DM}{{\it Discrete Math.}}
\newcommand{\DAM}{{\it Discrete Appl. Math.}}

\begin{document}
	
\title{Vizing's theorem for signed multigraphs}
	
\author{You Lu\thanks{School of Mathematics and Statistics, Northwestern Polytechnical University,  Xi'an, Shaanxi 710129, China. Email: luyou@nwpu.edu.cn. Partially supported by the National Natural Science Foundation of China (Nos. 12431013, 12671418).}\ \thanks{Corresponding author.}, \ 
~~Jingru Zhao\thanks{School of Mathematics and Statistics, Northwestern Polytechnical University,  Xi'an, Shaanxi 710129, China. Email:  zjr1234562024@163.com.},
~~Li Zhang\thanks{Yan'an University, Yan'an, Shaanxi 716000, China. Email:  lizhang@yau.edu.cn. Partially supported by the National Natural Science Foundation of China (No. 12661075).}
}
	
\date{}

\maketitle
	
\begin{abstract}
We prove that every finite loopless signed multigraph $\Sigma=(G,\sigma)$ satisfies
$\chi'(\Sigma)\leq\Delta(G)+\mu(G)$, where $\chi'(\Sigma)$ is its chromatic index,
and $\Delta(G)$ and $\mu(G)$ are the maximum degree and maximum multiplicity
of $G$, respectively. This bound is sharp, even when both positive and negative edges are present,
and generalizes both Vizing's theorem for ordinary multigraphs and
Behr's theorem for signed simple graphs.

\medskip
\noindent\textbf{Keywords}: signed multigraph, edge coloring, Vizing's theorem.
\end{abstract}

\section{Introduction}
All graphs in this paper are finite and loopless; parallel edges are allowed.
For terminology not defined here, we follow~\cite{BM2008,D2017}.
For a graph $G$, let $d_G(v)$ denote the degree of a vertex $v$, and let
$\Delta(G)$ denote the maximum degree of $G$. For distinct vertices $u,v$, let
$\mu_G(u,v)$ be the number of edges joining $u$ and $v$. The maximum
\emph{multiplicity} of $G$ is
$$
\mu(G)=\max\{\mu_G(u,v):u,v\in V(G),\ u\neq v\}.
$$
A graph is \emph{simple} if $\mu(G)\leq1$.

For integers $a,b$, write $[a,b]=\{j\in\mathbb Z:a\leq j\leq b\}$.
For a nonnegative integer $k$, a \emph{$k$-edge-coloring} of $G$ is a mapping $\phi:E(G)\to[1,k]$
that assigns distinct colors to adjacent edges. The \emph{chromatic index}
$\chi'(G)$ is the least nonnegative integer $k$ for which such a coloring
exists. Clearly, $\chi'(G)\geq\Delta(G)$. The following classical upper
bound was proved independently by Vizing~\cite{Vizing1964} and
Gupta~\cite{Gupta1974}.

\begin{theorem}[Vizing's theorem, \cite{Vizing1964, Gupta1974}]\label{th: vizing}
Every multigraph $G$ satisfies
\[
\chi'(G)\leq\Delta(G)+\mu(G).
\]
\end{theorem}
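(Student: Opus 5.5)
We prove the statement by induction on $|E(G)|$, using a Vizing fan and alternating (Kempe) paths. Set $k=\Delta(G)+\mu(G)$. Remove an edge $e_0=xy_0$; by induction $G-e_0$ has a $k$-edge-coloring $\phi$, and we extend $\phi$ to $e_0$, possibly after recoloring. For a vertex $v$ let $\overline C(v)\subseteq[1,k]$ be the set of colors missing at $v$ under $\phi$. Every vertex other than $x,y_0$ has $|\overline C(v)|\ge k-\Delta(G)=\mu(G)\ge 1$. The endpoints $x$ and $y_0$ each have one fewer colored edge, so $|\overline C(x)|,|\overline C(y_0)|\ge\mu(G)+1$.

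First we build a maximal multi-fan at $x$. This is a sequence of edges $e_0=xy_0,\,e_1=xy_1,\dots,e_p=xy_p$, where the neighbors $y_i$ need not be distinct vertices. For each $i\ge1$, the color $\phi(e_i)$ must be missing at some $y_j$ with $j<i$. Let $Y$ be the set of distinct vertices appearing among $y_0,\dots,y_p$.

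The key counting lemma is that the sets $\overline C(x)$ and $\overline C(y)$, for $y\in Y$, are pairwise disjoint whenever $\phi$ cannot be extended. We prove the two cases separately.

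\emph{Disjointness of $\overline C(x)$ and $\overline C(y)$.} Suppose some color $\alpha\in\overline C(x)\cap\overline C(y_j)$. Trace back the chain of fan edges from $y_j$ to $y_0$ and shift colors along it: each edge in the chain takes the color of its successor, and the last edge receives $\alpha$. This frees a color for $e_0$, which contradicts non-extendability.

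\emph{Disjointness of $\overline C(y)$ and $\overline C(y')$ for distinct $y,y'\in Y$.} Take $\alpha\in\overline C(x)$ and $\beta\in\overline C(y)\cap\overline C(y')$. Consider the $(\alpha,\beta)$-alternating paths starting at $y$ and at $y'$. Both paths cannot end at $x$, since $x$ has no $\alpha$-edge and so at most one path ends there. Swap colors on a path avoiding $x$; this makes $\alpha$ missing at one of $y$ or $y'$. Then the shift from the previous case applies.

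\emph{Counting.} Maximality of the fan means every color $\phi(xz)$ with $z\in Y$ that is missing at some vertex of $Y$ already occurs on a fan edge. The colors missing on $Y$ that are not missing at $x$ must appear on edges from $x$ to $Y$. There are at most $\mu(G)\,|Y|$ such edges, and at most $\mu(G)\,|Y|-1$ of them are colored, since $e_0$ is uncolored. On the other hand,
\[
\sum_{y\in Y}|\overline C(y)|\ \ge\ \mu(G)\,|Y|+1 .
\]
Disjointness forces all these missing colors to be distinct, and each must be the color of an edge $xz$ with $z\in Y$. This is impossible, which completes the contradiction.

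\textbf{Main obstacle.} The hardest step is the second disjointness claim, that $\overline C(y)\cap\overline C(y')=\emptyset$. The Kempe swap might disturb the fan, because the swapped path could pass through fan edges colored $\alpha$ or $\beta$. My first approach is to truncate the fan at the first $y_i$ whose status changes after the swap, and check that the shifted prefix is still a valid fan. If that fails, I would instead argue by a minimal counterexample choosing $\phi$ and the fan extremally, for instance to minimize the fan length.
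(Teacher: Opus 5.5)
Your plan is the standard multi-fan proof and it works, but the step you flag is a genuine hole as written. ``Then the shift from the previous case applies'' is not justified: your first claim concerns a multi-fan with respect to a fixed coloring, and after the swap $F$ need not be one. A fan edge colored $\beta$ may have had its only admissible predecessor at an end of the swapped path, and the far end of that path may itself be a fan vertex.

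Your first proposed repair closes the hole, with one extra observation.
Let $P$ be the $(\alpha,\beta)$-path from $y^*\in\{y,y'\}$ that avoids $x$, let $z$ be its other end, and let $\phi'$ be the swapped coloring. Since $P$ avoids $x$, no fan edge lies on $P$, and only $y^*$ and $z$ change their missing sets. Let $m$ be the least index with $y_m\in\{y^*,z\}$. Every predecessor used by $e_1,\dots,e_m$ has index less than $m$, so $(e_0,\dots,e_m)$ is still a multi-fan for $\phi'$.
\begin{itemize}
\item If $y_m=y^*$, then $\alpha$ is missing at $y_m$.
\item If $y_m=z$, then $z$ missed exactly one of $\alpha,\beta$ under $\phi$. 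By your first claim for the original fan it was not $\alpha$, so $z$ now misses $\alpha$.
\end{itemize}
In both cases $\alpha\in\overline C_{\phi'}(x)\cap\overline C_{\phi'}(y_m)$, and shifting along the prefix colors $G$.

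Two small corrections follow. First, your key lemma should assume that $G$ has no $k$-edge-coloring, not merely that $\phi$ cannot be extended, since the contradiction comes from a different coloring. Second, your maximality sentence runs in the wrong direction. What you need, and what your next sentence actually uses, is that every edge at $x$ whose color is missing at some vertex of $Y$ is a fan edge, and hence ends in $Y$.

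With that repair your route is correct, and it differs from the paper's. The paper does not prove this theorem itself. It cites it and recovers it as the all-negative case of Theorem~\ref{th:main}, where a zero-free coloring with colors $\pm1,\dots,\pm k/2$ is an ordinary $k$-edge-coloring. There, distinct missing colors $c(e)$ are fixed in advance for the parallel edges from the hinge to each neighbor, giving a single linear fan (Definition~\ref{df:fan}). Instead of disjointness and counting, the proof analyzes how one Kempe chain runs through that fan (Lemmas~\ref{le:1} and~\ref{le:2}) to obtain the local criterion of Corollary~\ref{co:extension}. Specialized to ordinary multigraphs, pairing $a$ with $-a$ is just an arbitrary pairing of the colors. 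Because those lemmas are stated only for an even number of nonzero colors, odd $\Delta+\mu$ needs the separate matching-plus-color-$0$ step of Section~\ref{se:mainproof}.

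Your argument is shorter for ordinary multigraphs and needs no parity split. It does rely on Kempe chains being paths with exactly two ends, so that $x,y,y'$ cannot all lie on one chain. Signed Kempe chains are trails that may revisit vertices, including the hinge, so that step does not transfer directly; the paper's single-chain analysis is built to avoid it.
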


In particular, every simple graph $G$ satisfies $\chi'(G)\leq\Delta(G)+1$.

A \emph{signed graph} $\Sigma=(G,\sigma)$ consists of a graph $G$ and a
\emph{signature} $\sigma:E(G)\to\{1,-1\}$. An edge $e$ is \emph{positive}
if $\sigma(e)=1$ and \emph{negative} if $\sigma(e)=-1$.
Edge coloring of signed graphs was introduced independently by
Behr~\cite{Behr2020} and Zhang et al.~\cite{ZhangEtAl2020}.
We use Behr's formulation in terms of half-edges. For an edge $e$ incident
with $v$, let $h_e^v$ denote its half-edge at $v$. Write $H(G)$ for the set of all half-edges of $G$, and $H_G(v)$ for the set of half-edges incident with $v$.
For a nonnegative integer $k$, define
$$
M_k=\begin{cases}
[-r,-1]\cup[1,r],&k=2r,\\
[-r,r],&k=2r+1.
\end{cases}
$$
A \emph{$k$-edge-coloring} of $\Sigma$ is a mapping $\gamma:H(G)\to M_k$
such that the colors on $H_G(v)$ are distinct for every $v\in V(G)$ and
$$
\gamma(h_e^u)=-\sigma(e)\gamma(h_e^v)
\qquad(e=uv\in E(G)).
$$
The \emph{chromatic index} $\chi'(\Sigma)$ is the least nonnegative integer
$k$ for which $\Sigma$ has a $k$-edge-coloring. Thus
$\chi'(\Sigma)\geq\Delta(G)$. If all edges are negative, the two half-edges
of each edge receive the same color, so this definition reduces to ordinary
edge coloring and $\chi'(\Sigma)=\chi'(G)$.

Behr~\cite{Behr2020} proved that every signed simple graph
$\Sigma=(G,\sigma)$ satisfies
$$
\chi'(\Sigma)\leq\Delta(G)+1.
$$
Further work on signed simple graphs has addressed
$\Delta$-edge-colorability of signed planar
graphs~\cite{ZhangEtAl2020,ZhangLuZhang2023}, adjacency lemmas for critical
signed graphs~\cite{cao2023,ZhangBroersmaLuZhang2025}, and the dependence of
the chromatic index on the signature~\cite{JanczewskiEtAl2023}.
For signed multigraphs, Steffen and Wolf~\cite{SteffenWolf2023}
established the analogue of Shannon's bound:
$$
\chi'(\Sigma)\leq\left\lfloor\frac{3\Delta(G)}2\right\rfloor.
$$
Our main result extends Vizing's bound to signed multigraphs.

\begin{theorem}\label{th:main}
Every signed multigraph $\Sigma=(G,\sigma)$ satisfies
$$
\chi'(\Sigma)\leq\Delta(G)+\mu(G).
$$
\end{theorem}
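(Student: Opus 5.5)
We argue by induction on $|E(G)|$, following the Vizing-fan / Kierstead-path scheme adapted to half-edge colorings, as in Behr's proof for signed simple graphs. Let $k=\Delta(G)+\mu(G)$. Delete an edge $e_0=xy_0$ and take a $k$-edge-coloring $\gamma$ of $\Sigma-e_0$. We then modify $\gamma$ so that $e_0$ can be colored.

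For a vertex $v$ let $\miss{\gamma}{v}$ be the set of colors of $M_k$ missing at $v$. Since $|M_k|=k$ and $d(v)\le\Delta$, every vertex other than $x,y_0$ misses at least $\mu$ colors, and $x,y_0$ miss at least $\mu+1$. The edge $e_0$ can be colored exactly when some $\alpha\in\miss{\gamma}{x}$ satisfies $-\sigma(e_0)\alpha\in\miss{\gamma}{y_0}$. The basic recoloring tool is the signed analogue of a Kempe chain. For colors $\alpha,\beta$, consider the subgraph formed by the half-edges colored in $\{\pm\alpha,\pm\beta\}$. Each vertex meets at most four of its half-edges, so we restrict to suitable pairs. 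Following Behr, walk from a vertex along half-edges, using the rule $\gamma(h^u)=-\sigma\gamma(h^v)$ to transport colors. This gives alternating paths along which we can swap the roles of $\alpha$ and $\beta$ (with signs) and still have a valid coloring. We first prove a lemma: for $\alpha\in\miss{\gamma}{u}$, $\beta\in\miss{\gamma}{w}$, the maximal $(\alpha,\beta)$-trail starting at $u$ is a path. It either ends at $w$ or can be switched so that $\beta$ becomes missing at $u$ without disturbing the other endpoint. Sign bookkeeping (the color $0$ when $k$ is odd, and the colors $\pm\alpha$) needs case care, and we handle it exactly as in Behr's paper.

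Next we build a multi-fan at $x$. Start with $y_0$. Given $y_0,\dots,y_i$, if some missing color at some $y_j$, transported across the edge to $x$ via the signs, is a color used at $x$ on an edge $xy$ with $y\notin\{y_0,\dots,y_i\}$, add $y$ as $y_{i+1}$. Let $F$ be a maximal such fan, counting all parallel edges between $x$ and each $y_j$. We then prove the key claim: in a counterexample, the transported missing sets of $x,y_0,\dots,y_m$ are pairwise disjoint. The proof is the usual one. If two sets meet, take a Kempe path from $x$ and shift colors down the fan sequence; this makes $e_0$ colorable. The signed twist is that shifting a color along an edge $xy_j$ changes it by the factor $-\sigma$. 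So "missing color" at $y_j$ must always be read through the transport map of one specific edge joining $x$ to $y_j$. Parallel edges of different signs are the delicate point, since different edges from the same $y_j$ give different transported sets.

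Finally we count. By maximality, every transported missing color of the fan lies on an edge from $x$ into $\{y_0,\dots,y_m\}$, or is missing at $x$. Disjointness then gives
$$(\mu+1)+(\mu+1)+m\mu\le |\miss{\gamma}{x}|+\sum_j|\miss{\gamma}{y_j}| \le k-(d(x)-\textstyle\sum_j\mu(x,y_j))+\cdots,$$
which becomes
$$\sum_{j=0}^m|\miss{\gamma}{y_j}|\le\sum_j\mu(x,y_j)-1\le(m+1)\mu-1.$$
This contradicts $\sum_j|\miss{\gamma}{y_j}|\ge(m+1)\mu+1$. The main obstacle is the disjointness claim when $x$ and $y_j$ are joined by parallel edges of both signs, together with the $0$ color for odd $k$. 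Here I would first try fixing, for each $y_j$, the fan edge through which it was added, and defining transport only through that edge. I would then prove a separate lemma that a color missing at $y_j$ and at $x$, in the transported sense through any parallel edge, already lets us color $e_0$ via a single Kempe swap.
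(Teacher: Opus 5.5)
There is a genuine gap, in two places.

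First, the color $0$ for odd $k=\Delta+\mu$ is not a bookkeeping issue. Any sign-compatible interchange must satisfy $\pi(-z)=-\pi(z)$, which forces $\pi(0)=0$. So $0$ can never be exchanged with a nonzero color along a signed Kempe chain in the sense of Lemma~\ref{le:swap}. A vertex $y_j\neq y_0$ of your fan with $d_G(y_j)=\Delta$ misses exactly $\mu$ colors, and you cannot prevent $0$ from being one of them. If $0$ is missing at two fan vertices, their transported sets meet in $0$, and your disjointness argument has no swap available to resolve this. The paper never recolors with $0$ at all. Every recoloring lemma is stated for zero-free colorings with an even number $k$ of colors. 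When $\Delta+\mu$ is odd, the paper deletes a maximal matching $N$ of $G[V_\Delta(G)]$ and colors $G-N$ with $\Delta+\mu-1$ nonzero colors, using Lemma~\ref{le:evenbound} or the Fournier-type Lemma~\ref{le:independent}. The latter applies because every neighbor $v$ of a maximum-degree vertex $u$ then satisfies $d_G(v)+\mu_G(u,v)\le\Delta+\mu-1$. Finally it puts $0$ on $N$. Saying you handle it ``exactly as in Behr's paper'' supplies no substitute for this reduction inside your direct induction with $k=\Delta+\mu$.

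Second, your chain lemma is false, and the ``usual'' elementarity proof does not transfer. With a general signature, an $a/b$-chain is a trail that can revisit vertices, pass through the hinge internally, and end at the hinge. The proof of Lemma~\ref{le:1} is largely a case analysis of a chain $T$ that passes through $u$ via $e_0$ and $e_j$ before ending with $e_\ell$. In particular, your proposed separate lemma fails as stated. Suppose $a\in\overline{C}_\gamma(x)$ and $\sigma(e)a\in\overline{C}_\gamma(y)$ for an edge $e=xy$. This is a match of magnitudes rather than of transported colors, which is exactly what parallel edges of opposite signs create. The $\pm a$-chain from $y$ can then end at $x$ through its $-a$ half-edge, and the swap makes $a$ present at $x$. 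The paper needs the full fan-and-chain argument of Lemma~\ref{le:1} for precisely this case, and it then dispenses with elementarity and counting altogether. It uses a sequential fan in which distinct missing colors $c(e)$ are preassigned to the parallel edges joining $u$ to each neighbor $v$; this is possible because $|\overline{C}_{\gamma_0}(v)|\ge\mu_G(u,v)$. Lemma~\ref{le:2} then turns the repetition $r_s=r_\ell$ into a magnitude match. Your disjointness claim carries the whole argument, and it is left unproved in exactly the parallel-edge case you identify as the main obstacle. So the proposal does not yet constitute a proof.
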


Theorem~\ref{th:main} recovers Vizing's theorem when all edges are negative
and Behr's bound when the underlying graph is simple. The bound is sharp even for signed multigraphs containing both positive
and negative edges.  Let $G$ be obtained from $K_5$ by
replacing each edge with $m\geq1$ parallel edges,
and let $\Sigma=(G,\sigma)$ have exactly one positive
edge, with all remaining edges negative. 
In any $k$-edge-coloring of $\Sigma$, the two half-edges
of each negative edge receive the same color, and
negative edges of the same color form a matching.
Since $G$ has five vertices, each color is used on
at most two negative edges. The $10m-1$ negative edges therefore imply that $10m-1\leq2k$. Together with
Theorem~\ref{th:main}, this gives
$$
5m=\left\lceil\frac{10m-1}{2}\right\rceil
\leq\chi'(\Sigma)
\leq\Delta(G)+\mu(G)=5m.
$$

Our proof adapts Behr's signed Kempe chain and fan argument to
multigraphs. Parallel edges allow a neighbor of the fan hinge to occur
more than once. To keep fan shifts proper, we assign distinct missing
colors to the parallel edges joining the hinge to each neighbor.
We first work with an even number of nonzero colors and establish
a local extension criterion. In the remaining parity case, we remove
a suitable matching, color the remaining graph with an even number
of nonzero colors, and assign color $0$ to the edges of the matching.

Section~\ref{se: notation} introduces the recoloring tools and proves
their basic properties. Section~\ref{se:extension} establishes the
extension lemmas, and Section~\ref{se:mainproof} completes the proof
of Theorem~\ref{th:main}.

\section{Notation and recoloring tools}\label{se: notation}

Let $\Sigma=(G,\sigma)$ be a signed multigraph. For $v\in V(G)$, let
$N_G(v)$ and $E_G(v)$ be the sets of neighbors and incident edges of $v$,
respectively. For distinct vertices $u,v$, let $E_G(u,v)$ be the set of
edges joining them, so $\mu_G(u,v)=|E_G(u,v)|$.
Throughout the paper, a \emph{zero-free $k$-edge-coloring} means an edge coloring with colors in $M_k$, where $k$ is a nonnegative even integer.

For $F\subseteq E(G)$, write
$\Sigma-F=(G-F,\sigma|_{E(G)\setminus F})$, and abbreviate
$\Sigma-\{e\}$ to $\Sigma-e$. A coloring of $\Sigma-F$ is also viewed as a
\emph{partial edge coloring} of $\Sigma$, with precisely the edges in $F$
uncolored. For a partial $k$-edge-coloring $\gamma$ with uncolored edge set
$F$, put
$$
C_\gamma(v)=\{\col{\gamma}{e}{v}:e\in E_{G-F}(v)\},
\qquad \miss{\gamma}{v}=M_k\setminus C_\gamma(v).
$$
Colors in $C_\gamma(v)$ are said to be \emph{present} at $v$, and colors in $\miss{\gamma}{v}$ are said to be \emph{missing} at $v$. The \emph{magnitude} of a colored edge $e=uv$ is
$|\col{\gamma}{e}{u}|=|\col{\gamma}{e}{v}|$.
To \emph{color $e=uv$ with color $a$ at $u$} means to assign $a$ to $h_e^u$
and $-\sigma(e)a$ to $h_e^v$. This is possible whenever those colors
are missing at their respective ends.

\subsection{Signed Kempe chains}
For distinct nonzero colors $a,b\in M_k$, let $\pi_{a,b}:M_k\to M_k$ be the permutation that interchanges $a$ with $b$ and $-a$ with $-b$, fixing all other colors. If $b=-a$, it simply interchanges $a$ and $-a$. In both cases,
\begin{equation}\label{eq:pi}
\pi_{a,b}(\pi_{a,b}(z))=z, \qquad
\pi_{a,b}(-z)=-\pi_{a,b}(z) \qquad (z\in M_k).
\end{equation}

\begin{definition}\label{df:kempe}
Let $\gamma$ be a partial $k$-edge-coloring, and suppose that
$a\in\miss{\gamma}{v_0}$ and $b\in C_\gamma(v_0)$ are nonzero.
The \emph{$a/b$-chain} at $v_0$ with respect to $\gamma$ is the maximal trail
$T=v_0e_1v_1\cdots e_mv_m$ satisfying
$$
\gamma(h_{e_1}^{v_0})=b,\qquad
 \col{\gamma}{e_{i+1}}{v_i}
   =\pi_{a,b}\bigl(\col{\gamma}{e_i}{v_i}\bigr)
 \qquad (i\in [1,m-1]).
$$
We denote it by $K_{a,b}^{\gamma}(v_0,v_m)$.
\end{definition}

Along $T$, the half-edges of each positive edge have opposite colors.
When $|a|\neq|b|$, the edge magnitudes alternate between $|b|$ and $|a|$.
Since the half-edge colors at each vertex are distinct, the rule defining
$T$ determines the next edge uniquely whenever it exists.
A \emph{swap} on $T$ replaces the color $z$ of each half-edge of $T$ by $\pi_{a,b}(z)$.

The following formulation of Behr's signed Kempe swap lemma
allows parallel edges and coincident ends. Parallel edges cause no
ambiguity, since each color occurs on at most one half-edge at a vertex.
At every internal occurrence of a vertex, swapping merely interchanges
a pair of colors. Maximality ensures that the new endpoint colors were
missing before the swap; when the ends coincide, these new colors are
distinct because $\pi_{a,b}$ is a permutation. Finally,
Eq.~\eqref{eq:pi} preserves the sign constraint on every edge.

\begin{lemma}[cf.\ \cite{Behr2020}]\label{le:swap}
Let $\gamma$ be a partial $k$-edge-coloring, and let
$T=v_0e_1v_1\cdots e_mv_m$
be an $a/b$-chain. Put $z=\col{\gamma}{e_m}{v_m}$.
Then swapping on $T$ yields a partial $k$-edge-coloring
$\gamma'$ with the same uncolored edges and the following
properties.
\begin{enumerate}
\item
$C_{\gamma'}(v)=C_\gamma(v)$ for every
$v\in V(G)\setminus\{v_0,v_m\}$.

\item
If $v_0\neq v_m$, then
\[
\begin{aligned}
C_{\gamma'}(v_0)
  &=(C_\gamma(v_0)\setminus\{b\})\cup\{a\},\\
C_{\gamma'}(v_m)
  &=(C_\gamma(v_m)\setminus\{z\})\cup\{\pi_{a,b}(z)\}.
\end{aligned}
\]

\item
If $v_0=v_m$, then
\[
C_{\gamma'}(v_0)
  =(C_\gamma(v_0)\setminus\{b,z\})
    \cup\{a,\pi_{a,b}(z)\}.
\]
\end{enumerate}
\end{lemma}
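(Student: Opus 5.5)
The proof is a direct check of the definition of a partial $k$-edge-coloring for $\gamma'$, followed by bookkeeping of the half-edge colors at each vertex along $T$. Three things must hold: every edge of $T$ still satisfies the sign constraint, the colors at each vertex remain distinct, and the sets $C_{\gamma'}(v)$ are as stated.

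\emph{Sign constraint.} Let $e_i=v_{i-1}v_i$ and put $x=\col{\gamma}{e_i}{v_{i-1}}$. Then $\col{\gamma}{e_i}{v_i}=-\sigma(e_i)x$. After the swap, Eq.~\eqref{eq:pi} gives
\[
\col{\gamma'}{e_i}{v_i}=\pi_{a,b}(-\sigma(e_i)x)=-\sigma(e_i)\pi_{a,b}(x).
\]
So the constraint still holds on $e_i$. Edges not on $T$ are unchanged, and so is the uncolored set $F$.

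\emph{Local bookkeeping.} Since $T$ is a trail, each edge is used once. Each visit of $T$ to a vertex therefore uses two distinct half-edges there, except at the ends, where one half-edge is used.

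At an internal occurrence $v_i$ (with $1\le i\le m-1$), the two half-edges carry $y=\col{\gamma}{e_i}{v_i}$ and $\pi_{a,b}(y)$. These are distinct, since $e_{i+1}\ne e_i$ and colors at a vertex are distinct. The swap exchanges them, since $\pi_{a,b}$ is an involution, so the set of colors at that occurrence is unchanged. Summing over all occurrences of a vertex that is not an endpoint gives (1).

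Moreover, every half-edge at $v$ whose color lies in $\{\pm a,\pm b\}$ lies on $T$ through such a pairing. Indeed, at an occurrence the colors $y$ and $\pi_{a,b}(y)$ both lie in $\{\pm a,\pm b\}$, because $y=\pm a$ or $\pm b$ along the chain. The only exception is $y=b$ at the start, which is handled below. Hence distinctness at internal occurrences is preserved automatically.

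\emph{Endpoints.} This is the main point, and it uses maximality. At $v_0$, the half-edge colored $b$ becomes $a$. Here $a$ was missing at $v_0$, and the only change among the colors used at the start is $b\to a$, so $a$ is not duplicated.

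At $v_m$, the half-edge colored $z$ becomes $\pi_{a,b}(z)$. Maximality of $T$ means no edge leaves $v_m$ with color $\pi_{a,b}(z)$ at $v_m$ that is not already in $T$. An edge with that color already in $T$ could only be the first edge at $v_0=v_m$, or an internal occurrence. An internal occurrence is impossible: there the color $\pi_{a,b}(z)$ would be paired with $z$, forcing the continuation to exist. So if $v_0\ne v_m$, then $\pi_{a,b}(z)\in\miss{\gamma}{v_m}$, and (2) follows.

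If $v_0=v_m$, the trail could in principle continue through the half-edge colored $b$ at $v_0$. That would require $\pi_{a,b}(z)=b$, i.e.\ $z=a$, which is impossible since $a$ is missing at $v_0$. So $\pi_{a,b}(z)$ is missing at $v_0$, with the possible exception $\pi_{a,b}(z)=a$. But $\pi_{a,b}(z)=a$ would give $z=b$, contradicting that $b$ occurs only once at $v_0$ and is used by $e_1$ (with $m\ge2$, since there are no loops).

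The new colors $a$ and $\pi_{a,b}(z)$ are distinct because $b\ne z$ and $\pi_{a,b}$ is a bijection. This yields (3).

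The delicate step is this endpoint analysis when $v_0=v_m$, or when $v_m$ is also visited internally. There one must argue carefully that the color $\pi_{a,b}(z)$ cannot sit on a half-edge of $T$ at $v_m$, using the pairing structure and maximality.
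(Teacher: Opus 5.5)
Your proposal is correct and follows the paper's own justification: colors are interchanged in pairs at internal occurrences, maximality handles the ends, bijectivity of $\pi_{a,b}$ handles $v_0=v_m$, and Eq.~\eqref{eq:pi} gives the sign constraint. You are in fact more careful than the paper in ruling out that the trail stops only because the next edge was already used, though the precise reason an internal occurrence is impossible is that the partner of the half-edge colored $\pi_{a,b}(z)$ would have color $z$ and hence be $h_{e_m}^{v_m}$ itself, forcing $e_m$ to be repeated or a loop. One slip: as written, your sentence ``every half-edge at $v$ whose color lies in $\{\pm a,\pm b\}$ lies on $T$'' is false (only the converse holds), but nothing in your argument depends on it.
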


When all edges are negative, an $a/b$-chain uses only the colors
$a$ and $b$ and is a path. For a general signature, a chain may
visit the same vertex more than once. We therefore specify each segment by its position along the chain,
rather than by its end vertices alone.

\subsection{Signed fans}

Let $e_0=uv_0$, and let $\gamma_0$ be a zero-free
$k$-edge-coloring of $\Sigma-e_0$ such that
\begin{equation}\label{eq:assignmentcondition}
|\miss{\gamma_0}{v}|\geq\mu_G(u,v)
\qquad(v\in N_G(u)).
\end{equation}
For each $v\in N_G(u)$, choose pairwise distinct colors
$c(e)\in\miss{\gamma_0}{v}$ for the edges $e\in E_G(u,v)$,
and put
$$
r(e)=-\sigma(e)c(e).
$$
These choices are possible by Eq.~\eqref{eq:assignmentcondition}.
We keep them fixed when defining the fan and its shifts.

\begin{definition}\label{df:fan}
A \emph{signed fan} with hinge $u$ is a maximal sequence
of distinct edges $F=(e_0,e_1,\ldots,e_s)$ incident with
$u$ such that
$$
\col{\gamma_0}{e_i}{u}=r(e_{i-1})
\qquad(i\in[1,s]).
$$
For $i\in[0,s]$, write $e_i=uv_i$, $c_i=c(e_i)$ and
$r_i=r(e_i)$, where the vertices $v_i$ need not be distinct.
\end{definition}

For $t\in[0,s]$, a \emph{shift} of $F$ to $e_t$ assigns
the colors $r_i$ and $c_i$ to the half-edges of $e_i$ at
$u$ and $v_i$, respectively, for $i\in[0,t-1]$.
The edge $e_t$ is uncolored, and all edges outside
$\{e_0,e_1,\ldots,e_t\}$ retain their colors under $\gamma_0$.
We denote the resulting mapping by $\gamma_t$.

\begin{lemma}\label{le:fan-shift}
Let $F=(e_0, e_1,\ldots,e_s)$ be a signed fan. Then the
following hold.
\begin{enumerate}
\item The colors $r_0, r_1,\ldots,r_{s-1}$ are pairwise
distinct. If $r_s\in C_{\gamma_0}(u)$, there is a unique
$\ell\in[0,s-1]$ such that $r_s=r_\ell$.
\item For every $t\in[0,s]$, the mapping $\gamma_t$ is a
zero-free $k$-edge-coloring of $\Sigma-e_t$, and
\begin{equation*}
C_{\gamma_t}(u)=C_{\gamma_0}(u),\qquad
c_i\in\miss{\gamma_t}{v_i}\qquad(i\in[t,s]).
\end{equation*}
\end{enumerate}
\end{lemma}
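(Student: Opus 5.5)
For part 1, I would argue directly from the fan condition. For $i\in[1,s]$ we have $\col{\gamma_0}{e_i}{u}=r_{i-1}$. The edges $e_1,\ldots,e_s$ are distinct, colored, and incident with $u$, and half-edge colors at $u$ are distinct under $\gamma_0$. Hence $r_0,\ldots,r_{s-1}$ are the colors of distinct half-edges at $u$, so they are pairwise distinct. If $r_s\in C_{\gamma_0}(u)$, then $r_s=\col{\gamma_0}{e}{u}$ for some edge $e\neq e_0$ at $u$. Maximality of $F$ means we cannot append $e$, so $e$ must already be some $e_i$ with $i\in[1,s]$. Then $r_s=r_{i-1}$, and we take $\ell=i-1\in[0,s-1]$. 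Uniqueness of $\ell$ follows from the pairwise distinctness just shown.

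For part 2, I would verify properness of $\gamma_t$ vertex by vertex, plus the sign constraint. The sign constraint holds on recolored edges $e_i$ with $i<t$, since $r_i=-\sigma(e_i)c_i$. At $u$, the half-edges of $e_0,\ldots,e_{t-1}$ receive $r_0,\ldots,r_{t-1}$, which are pairwise distinct by part 1. Under $\gamma_0$, the half-edges of $e_1,\ldots,e_t$ at $u$ carried exactly $r_0,\ldots,r_{t-1}$, while $e_0$ was uncolored. Hence the multiset of colors present at $u$ is unchanged: $e_t$ loses $r_{t-1}$ and each $e_i$ takes over the color $r_i$ previously on $e_{i+1}$. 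So the colors at $u$ stay distinct and $C_{\gamma_t}(u)=C_{\gamma_0}(u)$. All colors lie in $M_k$ and are nonzero, because the $c_i$ are chosen from $\miss{\gamma_0}{v_i}\subseteq M_k$, $k$ is even, and $M_k$ excludes $0$.

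The main obstacle is an end vertex $w\neq u$, where several $e_i$ may share $w=v_i$ because of parallel edges. Fix such $w$ and put $I=\{i\in[0,s]:v_i=w\}$. Every $e_i$ with $i\in I$ lies in $E_G(u,w)$, and by choice the colors $c_i$ for $i\in I$ are pairwise distinct and all missing at $w$ under $\gamma_0$. The recoloring changes only half-edges of edges in $\{e_0,\ldots,e_t\}$, and at $w$ these are among the $e_i$ with $i\in I$. I would describe the colors at $w$ under $\gamma_t$ as follows:
\begin{itemize}
\item colors of $\gamma_0$ on edges outside $\{e_i:i\in I,\,i\le t\}$;
\item together with $c_i$ for $i\in I$, $i<t$.
\end{itemize}
The new colors $c_i$ are pairwise distinct and lie in $\miss{\gamma_0}{w}$, so they avoid every retained $\gamma_0$-color at $w$. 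The old colors removed at $w$ are only deleted, which keeps the coloring proper. Hence $\gamma_t$ is proper at $w$ and
\[
C_{\gamma_t}(w)\subseteq C_{\gamma_0}(w)\cup\{c_i:i\in I,\ i<t\}.
\]
For $j\in[t,s]$ with $v_j=w$, the color $c_j$ is missing under $\gamma_0$ and differs from every $c_i$ with $i<t$, $i\in I$, since $j\neq i$. Therefore $c_j\in\miss{\gamma_t}{w}$. This is the one place where the distinctness of the assignment $c$ on parallel edges is essential.
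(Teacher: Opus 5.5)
Your proof is correct and follows essentially the same route as the paper. Part 1 uses the distinct half-edge colors at $u$ together with maximality of the fan, and part 2 checks the sign constraint, observes that $r_0,\ldots,r_{t-1}$ merely move from $e_1,\ldots,e_t$ to $e_0,\ldots,e_{t-1}$ at $u$, and relies on the pairwise distinct choice of $c$ on $E_G(u,w)$ at each end vertex $w$. One small quibble: your closing remark calls $c_j\in\miss{\gamma_t}{w}$ the one place where this distinctness is essential, but your own argument also needs it for properness at $w$ when several parallel edges are shifted.
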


\begin{proof}
(1) Under $\gamma_0$, the distinct half-edges $h_{e_1}^u,\ldots, h_{e_s}^u$
have colors $r_0,\ldots,r_{s-1}$, and so these colors
are pairwise distinct. If $r_s$ is present at $u$, the maximality of $F$ implies
that $\col{\gamma_0}{e_j}{u}=r_s$ for some $j\in[1,s]$.
Hence $r_s=r_{j-1}$, and the pairwise distinctness of
$r_0,\ldots,r_{s-1}$ gives uniqueness, proving (1).

(2) Fix $t\in[0,s]$. For each $i\in [0, t-1]$, we have $
\col{\gamma_t}{e_i}{u}
=r_i=-\sigma(e_i)c_i
=-\sigma(e_i)\col{\gamma_t}{e_i}{v_i}.
$
All other colored edges retain their original colors.
It remains to verify that the half-edges incident with
each vertex have distinct colors.

At $u$, the colors $r_0,\ldots,r_{t-1}$ originally occur on
$e_1,\ldots,e_t$ and are reassigned to $e_0,\ldots,e_{t-1}$.
Thus their distinctness is preserved and
$C_{\gamma_t}(u)=C_{\gamma_0}(u)$.

At each $v\in N_G(u)$, the newly assigned colors are
pairwise distinct and were missing under $\gamma_0$.
Hence $\gamma_t$ is a zero-free $k$-edge-coloring of
$\Sigma-e_t$.
For $i\in[t,s]$, the assigned colors on the edges joining $u$ to
$v_i$ are pairwise distinct, so no shifted edge at $v_i$ receives $c_i$.
Thus $c_i$ remains missing under $\gamma_t$, proving (2).
\end{proof}

\section{Extension lemmas}\label{se:extension}

The following two lemmas adapt Behr's recoloring argument
for signed simple graphs~\cite{Behr2020} to signed multigraphs.

\begin{lemma}\label{le:1}
Let $\Sigma=(G,\sigma)$ be a signed multigraph with 
$e_0=uv_0\in E(G)$, let $k$ be a positive even integer, and let $\gamma_0$ be a zero-free
$k$-edge-coloring of $\Sigma-e_0$ satisfying
Eq.~\eqref{eq:assignmentcondition}. If there are colors
$a\in\miss{\gamma_0}{u}$ and $d\in\miss{\gamma_0}{v_0}$
with $|a|=|d|$, then $\Sigma$ has a zero-free
$k$-edge-coloring.
\end{lemma}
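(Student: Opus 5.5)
The plan is a direct recoloring argument: in every case, colour $e_0$ at $u$ with a colour of magnitude $|a|$. Neither the fan nor Eq.~\eqref{eq:assignmentcondition} should be needed beyond the existence of $\gamma_0$. Since $|d|=|a|$, we have $d\in\{a,-a\}$. Colouring $e_0$ with $a$ at $u$ puts $-\sigma(e_0)a$ on $h_{e_0}^{v_0}$.

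\emph{Easy case.} If $-\sigma(e_0)a\in\miss{\gamma_0}{v_0}$, for instance when $d=-\sigma(e_0)a$, we colour $e_0$ this way and are done.

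\emph{Remaining case.} Then $d=\sigma(e_0)a$ and $b:=-\sigma(e_0)a=-d\in C_{\gamma_0}(v_0)$. Consider the $d/b$-chain $T=K^{\gamma_0}_{d,b}(v_0,w)$ of Definition~\ref{df:kempe}. Since $b=-d$, the permutation $\pi_{d,b}$ just swaps $d$ and $-d$. Every edge of $T$ therefore has magnitude $|a|$, and the only colours ever changed are $\pm a$.
\begin{itemize}
\item If $w\neq u$ (this includes $w=v_0$): by Lemma~\ref{le:swap}, swapping on $T$ leaves $C(u)$ unchanged, so $a$ is still missing at $u$. At $v_0$ we trade $b$ for $d$, and in the closed case also the colour $z\in\{\pm a\}$ for $\pi_{d,b}(z)$. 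I would check that $b$ becomes missing at $v_0$. If $w=v_0$, the last half-edge colour $z$ at $v_0$ must be $d$ rather than $b$, because $d$ was missing before the swap. After the swap, $e_0$ can be coloured with $a$ at $u$, and it receives $b$ at $v_0$.
\item If $w=u\neq v_0$: the last half-edge of $T$ at $u$ has colour $z\in\{a,-a\}$, and $z\neq a$ because $a$ is missing at $u$. So $z=-a$, and swapping would make $a$ present at $u$ while freeing $-a$. Colouring $e_0$ with $-a$ at $u$ would then need $\sigma(e_0)a=d$ at $v_0$, but $d$ has just become present there. A naive swap therefore fails.
\end{itemize}

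\emph{Main obstacle: the case $w=u$.} My first attempt is Behr's truncation trick. Let $f$ be the last edge of $T$, with $h_f^u$ coloured $-a$ and other end $x$. Uncolour $f$ and swap on the chain $T-f$, which now ends at $x$. Then $b$ is missing at $v_0$ and $a$ is still missing at $u$, so $e_0$ can be coloured with $a$ at $u$. Afterwards $-a$ is missing at $u$, and we try to colour $f$ with $-a$ at $u$.

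What remains is to verify that the required colour $\sigma(f)a$ is missing at $x$ after the partial swap. This is a sign and parity bookkeeping along $T$. The swap changed the colour on the last half-edge at $x$ of the preceding edge, and that colour was $\pi_{d,b}$ of the colour that $f$ had at $x$. Using Eq.~\eqref{eq:pi}, I expect the freed colour at $x$ to be exactly $\sigma(f)a$. Two special cases need separate checking: $x$ coinciding with $v_0$ or $u$, and $T$ revisiting $x$. For the revisits, I would track positions along the trail, as the remark after Lemma~\ref{le:swap} suggests.

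If this computation fails in some sub-case, my fallback is to instead run the $a/(-a)$-chain from $u$ and compare it with $T$. Since chains are determined uniquely by the rule in Definition~\ref{df:kempe}, that chain is $T$ reversed. This should let us reduce to the case where the endpoint is not the opposite hinge vertex.
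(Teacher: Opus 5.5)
\textbf{There is a genuine gap: the case $w=u$ is not handled, and it cannot be handled without Eq.~\eqref{eq:assignmentcondition}.}

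Your easy case and your case $w\neq u$ are fine. In fact $w=v_0$ never occurs: along $T$ the only colours at $v_0$ are $\pm a$, $d$ is missing there, and $b$ sits on the first half-edge, so $T$ cannot return to $v_0$. (Your remark that $z$ ``must be $d$'' would contradict $d$ being missing.) The case $w=u$ carries the whole content of the lemma, and your treatment of it starts from a false premise, namely that Eq.~\eqref{eq:assignmentcondition} is not needed.

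Here is a counterexample without that condition. Let $G$ be a triangle $uv_0x$ with all edges negative, and take $k=2$. Let $\gamma_0$ give $ux$ the colour $1$ and $xv_0$ the colour $-1$ (at both ends). Then $a=-1\in\miss{\gamma_0}{u}$ and $d=1\in\miss{\gamma_0}{v_0}$ with $|a|=|d|$. Yet $\Sigma$ is an ordinary triangle and has no $2$-edge-colouring. The only hypothesis that fails is Eq.~\eqref{eq:assignmentcondition}, at $x$, where $\miss{\gamma_0}{x}=\emptyset$. Two parallel edges $uv_0$ of opposite signs with $k=2$ give another example; it shows that this condition is exactly where multiplicity enters. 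So any argument that never uses the condition must break somewhere, and yours does.

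\textbf{Where the truncation step breaks.} Let $f=e_m$ be the last edge of $T$ and $x=v_{m-1}$.
\begin{itemize}
\item Since $\gamma_0(h_f^u)=-a$, we have $\gamma_0(h_f^x)=\sigma(f)a$.
\item By the defining rule of the chain, the preceding edge $e_{m-1}$ carries $\pi_{d,b}(\sigma(f)a)=-\sigma(f)a$ at $x$.
\item Swapping $T-f$ turns this into $\sigma(f)a$.
\end{itemize}
So after your partial swap, the colour $f$ needs at $x$ is \emph{present}, not freed. This is the opposite of what you expected, and it holds regardless of revisits. If $m=1$, then $x=v_0$, and $e_0$ has just taken $b=\sigma(f)a$ there, with the same result.

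Your fallback does not help either. The $a/(-a)$-chain at $u$ is $T$ reversed, and swapping it is the same recolouring.

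\textbf{What is missing is the fan.} This is the Vizing situation in which the two ends of $e_0$ are linked by a Kempe chain. The paper proceeds as follows.
\begin{itemize}
\item Use Eq.~\eqref{eq:assignmentcondition} to choose distinct missing colours $c(e)$ at the neighbours of $u$, with $c(e_0)=\sigma(e_0)a$.
\item Build the signed fan and shift it to its last edge $e_s$.
\item Study $K^{\gamma_s}_{c_s,-\sigma(e_s)a}(v_s,w)$, which acts by $\pi_{a,b}$ with $b=r_s$.
\item Arguing by contradiction, show that this chain ends at $u$ along the edge $e_\ell$ with $r_\ell=b$, that $|a|\neq|b|$, and that it has the form $Pe_jue_0Qe_\ell u$ for some $j\in[\ell+1,s-1]$.
\item Finish by swapping $P$ in both $\gamma_j$ and $\gamma_0$, and then using an $a/(-a)$-chain that ends along $e_0$.
\end{itemize}
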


\begin{proof}
Suppose, to the contrary, that $\Sigma$ has no zero-free
$k$-edge-coloring. The color $-\sigma(e_0)a$ must be
present at $v_0$ under $\gamma_0$, since otherwise
assigning $a$ to $h_{e_0}^u$ and $-\sigma(e_0)a$ to
$h_{e_0}^{v_0}$ would extend $\gamma_0$ to a zero-free $k$-edge-coloring of $\Sigma$.
As $d$ is missing at $v_0$ and $|a|=|d|$, we have
$d=\sigma(e_0)a$.

By Eq.~\eqref{eq:assignmentcondition}, for each $v\in N_G(u)$,
choose pairwise distinct colors $c(e)\in\miss{\gamma_0}{v}$
for the edges $e\in E_G(u,v)$, with
$c(e_0)=\sigma(e_0)a$. Put $r(e)=-\sigma(e)c(e)$, and
let $F=(e_0,e_1,\ldots,e_s)$ be the resulting signed fan.
In particular,
$$
c_0=\sigma(e_0)a,\qquad r_0=-a.
$$
Put $b=r_s$, and shift $F$ to $e_s$, obtaining the
coloring $\gamma_s$. By Lemma~\ref{le:fan-shift},
$$
a\in\miss{\gamma_s}{u},\qquad
c_s=-\sigma(e_s)b\in\miss{\gamma_s}{v_s}.
$$
Under $\gamma_s$, the color $b$ must be present at $u$, and
$-\sigma(e_s)a$ must be present at $v_s$. Indeed, if either were
missing, we could color $e_s$ using the pair $(b,c_s)$
or $(a,-\sigma(e_s)a)$, respectively, contradicting
our assumption.

Consider the signed Kempe chain
$$
T=K_{c_s,-\sigma(e_s)a}^{\gamma_s}(v_s,w).
$$
Swapping $T$ applies $\pi_{a,b}$ and makes
$-\sigma(e_s)a$ missing at $v_s$. This swap must make
$a$ present at $u$, since otherwise $e_s$ could be
colored using $a$ and $-\sigma(e_s)a$.
By Lemma~\ref{le:swap}, we have $w=u$, and the last
half-edge of $T$ has color $b$ under $\gamma_s$.
Lemma~\ref{le:fan-shift}(1) gives a unique
$\ell\in[0,s-1]$ with $r_\ell=b$. Thus $T$ ends
with $v_\ell e_\ell u$.

\begin{Claim}\label{cl:le1-distinct-magnitudes}
$|a|\neq|b|$ and $\ell\in[1,s-1]$.
\end{Claim}

\begin{proof}[Proof of Claim~\ref{cl:le1-distinct-magnitudes}]
Suppose that $|a|=|b|$. Since $a$ is missing at $u$
and $b$ is present there, we have $b=-a$ and $\ell=0$.
The chain $T$ cannot pass through $u$ internally,
as this would require both $a$ and $-a$ at $u$.
Since $T$ ends with $v_0e_0u$, we may write $T=(T-u)e_0u$,
where $T-u$ denotes the initial segment obtained by deleting the last
edge and the final vertex of $T$.

If $E(T-u)=\emptyset$, then $v_s=v_0$ and $\sigma(e_0)a
=\col{\gamma_s}{e_0}{v_0}
=-\sigma(e_s)a$.
Thus $c_s=\sigma(e_s)a=-\sigma(e_0)a$ is missing
at $v_0$ under $\gamma_0$, contradicting the presence
of $-\sigma(e_0)a$ there.

If $E(T-u)\neq\emptyset$, then $T-u$ avoids $u$ and hence
contains no fan edge. Its colors are therefore the same under
$\gamma_0$ and $\gamma_s$. Its first and last half-edge colors are $-c_s$
and $-c_0$, respectively, where
$c_s\in\miss{\gamma_0}{v_s}$ and
$c_0\in\miss{\gamma_0}{v_0}$.
Hence
$$
T-u=K_{c_s,-\sigma(e_s)a}^{\gamma_0}(v_s,v_0).
$$
By Lemma~\ref{le:swap}, swapping $T-u$ in $\gamma_0$
makes $-\sigma(e_0)a$ missing at $v_0$ and leaves
$a$ missing at $u$. Assigning $a$ to $h_{e_0}^u$
and $-\sigma(e_0)a$ to $h_{e_0}^{v_0}$
would give a zero-free $k$-edge-coloring of $\Sigma$,
a contradiction. Therefore $|a|\neq|b|$, which
also implies $\ell\neq0$.
\end{proof}

\begin{Claim}\label{cl:le1-later-edge}
$T$ contains a fan edge $e_j$ with
$j\in[\ell+1,s-1]$.
\end{Claim}

\begin{proof}[Proof of Claim~\ref{cl:le1-later-edge}]
Suppose that every fan edge on $T$ has index at most
$\ell$. Shift $F$ to $e_\ell$, obtaining the coloring
$\gamma_\ell$, and let $R$ be the segment obtained
from $T$ by deleting its last edge $e_\ell$.
The segment $R$ contains at least one edge, since the first edge
of $T$ has magnitude $|a|$, whereas $e_\ell$ has
magnitude $|b|$. Since every fan edge on $R$ has index
less than $\ell$, the colors of $R$ are the same under
$\gamma_\ell$ and $\gamma_s$.

The first and last half-edge colors of $R$ are
$-\sigma(e_s)a$ and $-\sigma(e_\ell)a$, whose
images under $\pi_{a,b}$ are $c_s$ and $c_\ell$.
By Lemma~\ref{le:fan-shift}, the colors $c_s$ and $c_\ell$ are
missing at $v_s$ and $v_\ell$, respectively, under $\gamma_\ell$.
Thus
$$
R=K_{c_s,-\sigma(e_s)a}^{\gamma_\ell}(v_s,v_\ell).
$$
By Lemma~\ref{le:swap}, swapping $R$ in
$\gamma_\ell$ makes $-\sigma(e_\ell)a$ missing
at $v_\ell$ and leaves $a$ missing at $u$. Assigning
$a$ to $h_{e_\ell}^u$ and $-\sigma(e_\ell)a$ to
$h_{e_\ell}^{v_\ell}$ would give a zero-free
$k$-edge-coloring of $\Sigma$, a contradiction.
Hence $T$ contains a fan edge with index greater
than $\ell$. Its index is less than $s$, since
$e_s$ is uncolored.
\end{proof}

Fix an index $j$ as in Claim~\ref{cl:le1-later-edge}.
Since $a$ is missing at $u$, the chain $T$ can use
only the colors $b,-a,-b$ there. Under $\gamma_s$,
the half-edges $h_{e_\ell}^u$ and $h_{e_0}^u$ have
colors $b$ and $-a$, respectively. Since the half-edge
colors at $u$ are distinct, we have
$\col{\gamma_s}{e_j}{u}=-b$.
If $u$ is an internal vertex of $T$, then $T$
must enter and leave $u$ along $e_0$ and $e_j$,
in either order, since $-a$ and $-b$ are the
only colors present at $u$ that are interchanged
by $\pi_{a,b}$. Because $e_j$ lies on $T$ and its last edge is $e_\ell$,
the chain passes through $u$ internally. Moreover, it does so exactly once,
since no edge of a trail is repeated.
Hence $T$ has one of the two forms shown in
Fig.~\ref{fig: structure of T}:
$$
\begin{aligned}
\text{(a)}\quad &T=P e_0u e_j Q e_\ell u,\\
\text{(b)}\quad &T=P e_j u e_0 Q e_\ell u,
\end{aligned}
$$
where $P$ and $Q$ are the indicated segments of $T$; both avoid
$u$ and therefore contain no fan edge.

\begin{figure}[htb]
\scriptsize
\centering
\captionsetup[subfloat]{labelformat=empty,font=scriptsize,justification=centering}
\subfloat[(a) $T=Pe_0ue_jQe_\ell u$]{%
\begin{minipage}[t]{0.48\textwidth}
\centering
\begin{tikzpicture}[scale=0.75]
\path[use as bounding box] (-4.25,-0.55) rectangle (4.55,6.45);

{
\tikzstyle{every node}=[circle,draw,fill=black,
  minimum size=3pt,inner sep=0pt]
\path (0,0) node (u) {};
\path (-3.0,2.70) node (vzero) {};
\path (-2.05,4.00) node (vi) {};
\path (-0.82,4.82) node (vell) {};
\path (0.82,4.82) node (vk) {};
\path (2.05,4.00) node (vj) {};
\path (3.0,2.70) node (vs) {};
}

\draw[dashed,line width=0.6pt] (u)--(vzero) (u)--(vi)
  (u)--(vell) (u)--(vk) (u)--(vj) (u)--(vs);
\draw[line width=0.7pt]
  (1.40,1.51)--(1.60,1.19)
  (1.57,1.663)--(1.77,1.343);

\coordinate (r) at (3.55,4.35);
\coordinate (outer) at (0,6.00);
\coordinate (l) at (-3.55,4.35);
\coordinate (t) at (0,5.42);
\draw[line width=1pt,directed] (vs)--(r)--(outer)--(l)--(vzero);
\draw[line width=1pt] (vzero)--(u);
\draw[line width=1pt,directed] (-2.160,1.944)--(-1.560,1.404);
\draw[line width=1pt] (vk)--(u);
\draw[line width=1pt,directed] (0.328,1.928)--(0.492,2.892);
\draw[line width=1pt,directed] (vk)--(t)--(vell);
\draw[line width=1pt] (vell)--(u);
\draw[line width=1pt,directed] (-0.590,3.470)--(-0.426,2.506);

\node[below] at (u) {$u$};
\node[left] at (vzero) {$v_0$};
\node[above left] at (vell) {$v_\ell$};
\node[above right] at (vk) {$v_j$};
\node[right] at (vs) {$v_s$};
\path (u)--node[pos=0.7,left=0pt] {$e_0$} (vzero);

\path (u)--node[pos=0.7, left=0pt] {$e_\ell$} (vell);
\path (u)--node[pos=0.7,right=0pt] {$e_j$} (vk);

\path (u)--node[pos=0.7,right=0pt] {$e_s$} (vs);

\path (u)--node[pos=0.25,sloped,below=0pt] {$-a$} (vzero);
\node[anchor=east,inner sep=0pt] at (-0.40,1.90) {$b$};
\node[anchor=west,inner sep=0pt] at (0.40,1.90) {$-b$};

\path (vs)--node[pos=0.50,right=-1pt] {$P$} (r);

\path (vk)--node[pos=0.30,above=-1pt] {$Q$} (t);
\end{tikzpicture}
\end{minipage}%
}
\subfloat[(b) $T=Pe_jue_0Qe_\ell u$]{%
\begin{minipage}[t]{0.48\textwidth}
\centering
\begin{tikzpicture}[scale=0.75]
\path[use as bounding box] (-4.25,-0.55) rectangle (4.55,6.45);
{
\tikzstyle{every node}=[circle,draw,fill=black,
  minimum size=3pt,inner sep=0pt]
\path (0,0) node (u) {};
\path (-3.0,2.70) node (vzero) {};
\path (-2.05,4.00) node (vi) {};
\path (-0.82,4.82) node (vell) {};
\path (0.82,4.82) node (vk) {};
\path (2.05,4.00) node (vj) {};
\path (3.0,2.70) node (vs) {};
}

\draw[dashed,line width=0.6pt] (u)--(vzero) (u)--(vi)
  (u)--(vell) (u)--(vk) (u)--(vj) (u)--(vs);
\draw[line width=0.7pt]
  (1.40,1.51)--(1.60,1.19)
  (1.57,1.663)--(1.77,1.343);

\coordinate (r) at (3.55,4.25);
\coordinate (l) at (-3.55,4.25);
\coordinate (t) at (-1.25,5.55);
\draw[line width=1pt,directed] (vs)--(r)--(vk);
\draw[line width=1pt] (vk)--(u);
\draw[line width=1pt,directed] (0.590,3.470)--(0.426,2.506);
\draw[line width=1pt] (vzero)--(u);
\draw[line width=1pt,directed] (-1.200,1.080)--(-1.800,1.620);
\draw[line width=1pt,directed] (vzero)--(l)--(t)--(vell);
\draw[line width=1pt] (vell)--(u);
\draw[line width=1pt,directed] (-0.590,3.470)--(-0.426,2.506);

\node[below] at (u) {$u$};
\node[left] at (vzero) {$v_0$};
\node[below left] at (vell) {$v_\ell$};
\node[above right] at (vk) {$v_j$};
\node[right] at (vs) {$v_s$};
\path (u)--node[pos=0.7,left=0pt] {$e_0$} (vzero);

\path (u)--node[pos=0.7,right=0pt] {$e_\ell$} (vell);
\path (u)--node[pos=0.7,left=0pt] {$e_j$} (vk);

\path (u)--node[pos=0.7,right=0pt] {$e_s$} (vs);

\path (u)--node[pos=0.25,sloped,below=0pt] {$-a$} (vzero);
\node[anchor=east,inner sep=0pt] at (-0.40,1.90) {$b$};
\node[anchor=west,inner sep=0pt] at (0.40,1.90) {$-b$};
\path (vs)--node[pos=0.50,right=-1pt] {$P$} (r);

\path (l)--node[pos=0.50,left=-1pt] {$Q$} (vzero);
\end{tikzpicture}
\end{minipage}%
}

\caption{Two possible forms of the signed Kempe chain $T$. The vertices
$v_0,\ldots,v_s$ are drawn separately for clarity but need not be distinct.}
\label{fig: structure of T}
\end{figure}

\Needspace{5\baselineskip}
\begin{Claim}\label{cl:le1-chain-order}
$T$ has form~(b).
\end{Claim}

\begin{proof}[Proof of Claim~\ref{cl:le1-chain-order}]
Suppose that $T$ has form~(a). Shift $F$ to
$e_\ell$, obtaining the coloring $\gamma_\ell$.
The segment $Q$ from $v_j$ to $v_\ell$ contains at least one edge,
since otherwise $e_j$ and $e_\ell$ would be
consecutive edges of magnitude $|b|$. As $Q$
contains no fan edge, its colors are the same
under $\gamma_\ell$ and $\gamma_s$.
Its first and last half-edge colors are
$\sigma(e_j)a$ and $-\sigma(e_\ell)a$, whose
images under $\pi_{a,b}$ are $c_j$ and $c_\ell$.
The colors $c_j$ and $c_\ell$ are missing at the corresponding
ends under $\gamma_\ell$, so
$$
Q=K_{c_j,\sigma(e_j)a}^{\gamma_\ell}(v_j,v_\ell).
$$
By Lemma~\ref{le:swap}, swapping $Q$ in
$\gamma_\ell$ makes $-\sigma(e_\ell)a$ missing
at $v_\ell$ and leaves $a$ missing at $u$. Assigning $a$ to $h_{e_\ell}^u$
and $-\sigma(e_\ell)a$ to $h_{e_\ell}^{v_\ell}$
would give a zero-free
$k$-edge-coloring of $\Sigma$, a contradiction.
\end{proof}

By Claim~\ref{cl:le1-chain-order}, we have
$T=P e_j u e_0 Q e_\ell u$. Shift $F$ to $e_j$,
obtaining the coloring $\gamma_j$.
The segment $P$ contains at least one edge, since the first edge
of $T$ has magnitude $|a|$, whereas $e_j$ has
magnitude $|b|$. As $P$ contains no fan edge,
its colors are the same under $\gamma_0$,
$\gamma_j$, and $\gamma_s$. Its first and last
half-edge colors are $-\sigma(e_s)a$ and
$\sigma(e_j)a$, whose images under $\pi_{a,b}$
are $c_s$ and $c_j$. These colors are missing
at the corresponding ends under both $\gamma_0$
and $\gamma_j$. Hence
$$
P=K_{c_s,-\sigma(e_s)a}^{\gamma_0}(v_s,v_j)
 =K_{c_s,-\sigma(e_s)a}^{\gamma_j}(v_s,v_j).
$$

By Lemma~\ref{le:swap}, swapping $P$ in $\gamma_j$
and $\gamma_0$ gives zero-free $k$-edge-colorings
$\gamma_j'$ of $\Sigma-e_j$ and $\eta$ of
$\Sigma-e_0$, respectively. These colorings agree
on all edges outside the fan and satisfy
\begin{equation}\label{eq:exceptionmissing}
a\in\miss{\gamma_j'}{u},\qquad
\sigma(e_j)a\in\miss{\gamma_j'}{v_j},
\end{equation}
and
\begin{equation}\label{eq:etamissing}
a\in\miss{\eta}{u},\qquad
\sigma(e_j)a\in\miss{\eta}{v_j},\qquad
c_0=\sigma(e_0)a\in\miss{\eta}{v_0}.
\end{equation}
The color $c_0$ remains missing at $v_0$ because the
swap changes the sets of present colors only at the
ends of $P$, where the new colors have magnitude
$|b|\neq|a|$.
By Eq.~\eqref{eq:exceptionmissing}, the color $-\sigma(e_j)a$ must be present at $v_j$
under $\gamma_j'$, since otherwise $e_j$ could be
colored using $a$ at $u$ and $-\sigma(e_j)a$ at
$v_j$.

Consider the signed Kempe chain
$$
S=K_{\sigma(e_j)a,-\sigma(e_j)a}^{\gamma_j'}(v_j,w').
$$
By Lemma~\ref{le:swap}, swapping $S$ makes
$-\sigma(e_j)a$ missing at $v_j$. If $a$ were
still missing at $u$ after the swap, assigning
$a$ to $h_{e_j}^u$ and $-\sigma(e_j)a$ to
$h_{e_j}^{v_j}$ would give a zero-free
$k$-edge-coloring of $\Sigma$, a contradiction.
Thus the swap makes $a$ present at $u$.
By Lemma~\ref{le:swap}, this implies that
$w'=u$ and that the last half-edge of $S$
has color $-a$ under $\gamma_j'$.
Since $P$ avoids $u$, we have
$\col{\gamma_j'}{e_0}{u}
=\col{\gamma_j}{e_0}{u}
=r_0=-a$, and so
the last edge of $S$ is $e_0$.
Moreover, $S$ reaches $u$ only at its end, since it uses only the colors
$a$ and $-a$, and $a$ is missing at $u$ under $\gamma_j'$.
Thus $S=(S-u)e_0u$, where $S-u$ denotes the segment preceding
the final edge $e_0$.

If $E(S-u)=\emptyset$, then $v_j=v_0$ and the first
half-edge of $S$ gives $-\sigma(e_j)a=\col{\gamma_j'}{e_0}{v_0}=\sigma(e_0)a$.
Hence $-\sigma(e_0)a=\sigma(e_j)a$ is missing
at $v_0$ under $\eta$ by Eq.~\eqref{eq:etamissing}.
If $E(S-u)\neq\emptyset$, then $S-u$ contains no fan edge,
so its colors are the same under $\eta$ and $\gamma_j'$. Its first and last
half-edge colors are $-\sigma(e_j)a$ and
$-\sigma(e_0)a$. Since $\sigma(e_j)a\in \miss{\eta}{v_j}$ and $\sigma(e_0)a\in \miss{\eta}{v_0}$ by Eq.~\eqref{eq:etamissing}, we have
$$
S-u=K_{\sigma(e_j)a,-\sigma(e_j)a}^{\eta}(v_j,v_0).
$$
By Lemma~\ref{le:swap}, swapping $S-u$ in $\eta$
makes $-\sigma(e_0)a$ missing at $v_0$ and leaves
$a$ missing at $u$.
In either case, assigning $a$ to $h_{e_0}^u$
and $-\sigma(e_0)a$ to $h_{e_0}^{v_0}$ gives
a zero-free $k$-edge-coloring of $\Sigma$.
This contradicts our assumption and completes
the proof.
\end{proof}


\begin{lemma}\label{le:2}
Let $\Sigma=(G,\sigma)$ be a signed multigraph with
$e_0=uv_0\in E(G)$, let $k$ be a positive even integer,
and let $\gamma_0$ be a zero-free $k$-edge-coloring
of $\Sigma-e_0$ satisfying Eq.~\eqref{eq:assignmentcondition} and $\miss{\gamma_0}{u}\neq\emptyset$. Then there exist
an edge $e'=uv'\in E(G)$ and a zero-free
$k$-edge-coloring $\psi$ of $\Sigma-e'$ such that
$u$ and $v'$ have missing colors of the same magnitude.
\end{lemma}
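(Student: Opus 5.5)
The plan is to run the fan-and-chain argument of Lemma~\ref{le:1}, but with a weaker goal. The conclusion of Lemma~\ref{le:2} only asks for some edge $e'=uv'$ and a coloring of $\Sigma-e'$ in which $u$ and $v'$ have missing colors of equal magnitude. Many branches that required a further argument in Lemma~\ref{le:1} should therefore end immediately here.

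\emph{Setup.} Fix $a\in\miss{\gamma_0}{u}$. Using Eq.~\eqref{eq:assignmentcondition}, choose the colors $c(e)$ arbitrarily (pairwise distinct on each $E_G(u,v)$). Put $r(e)=-\sigma(e)c(e)$, let $F=(e_0,\dots,e_s)$ be the resulting signed fan, and set $b=r_s$. By Lemma~\ref{le:fan-shift}, for every $t\in[0,s]$ the coloring $\gamma_t$ of $\Sigma-e_t$ satisfies $C_{\gamma_t}(u)=C_{\gamma_0}(u)$, so $a$ stays missing at $u$, and $c_t=-\sigma(e_t)r_t$ is missing at $v_t$.

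\emph{Easy cases.} If $b$ is missing at $u$, then $\gamma_s$ with $e'=e_s$ already works, because $|b|=|c_s|$. If $|b|=|a|$, then $c_s$ has magnitude $|a|$, so $\gamma_s$ and $e'=e_s$ work again. Otherwise $|a|\neq|b|$, and by Lemma~\ref{le:fan-shift}(1) there is a unique $\ell\in[0,s-1]$ with $r_\ell=b$.

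\emph{First chain.} Consider $T=K^{\gamma_s}_{c_s,-\sigma(e_s)a}(v_s,w)$. Swapping $T$ makes $-\sigma(e_s)a$ missing at $v_s$ by Lemma~\ref{le:swap}. If $a$ is still missing at $u$ afterwards, we are done with $e'=e_s$. Otherwise Lemma~\ref{le:swap} forces $w=u$ and the last half-edge of $T$ to have color $b$, so $T$ ends with $v_\ell e_\ell u$.

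\emph{No later fan edge on $T$.} Suppose $T$ contains no fan edge of index in $[\ell+1,s-1]$. Repeat the proof of Claim~\ref{cl:le1-later-edge}: the segment $R=T-e_\ell$ has the same colors under $\gamma_\ell$ and $\gamma_s$, and it is the chain $K^{\gamma_\ell}_{c_s,-\sigma(e_s)a}(v_s,v_\ell)$. Swapping it in $\gamma_\ell$ makes $-\sigma(e_\ell)a$ missing at $v_\ell$ while $a$ stays missing at $u$. Then $e'=e_\ell$ works.

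\emph{A later fan edge $e_j$ lies on $T$.} At $u$ the chain can only use the colors $b,-a,-b$. Since $h^u_{e_\ell}$ carries $b$, the edge $e_j$ carries $-b$ at $u$. The only way $T$ can pass through $u$ internally is along $e_j$ together with the edge $e^*$ having color $-a$ at $u$. This edge $e^*$ need not be $e_0$ now, and need not be a fan edge at all. So $T$ has one of two forms, $Pe^*ue_jQe_\ell u$ or $Pe_jue^*Qe_\ell u$, where $P$ and $Q$ avoid $u$.
\begin{enumerate}
\item In the first form, argue as in Claim~\ref{cl:le1-chain-order}. The segment $Q$ equals $K^{\gamma_\ell}_{c_j,\sigma(e_j)a}(v_j,v_\ell)$. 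Swapping it in $\gamma_\ell$ makes $-\sigma(e_\ell)a$ missing at $v_\ell$ with $a$ still missing at $u$, so $e'=e_\ell$ works.
\item In the second form, $P=K^{\gamma_j}_{c_s,-\sigma(e_s)a}(v_s,v_j)$. Swapping it in $\gamma_j$ yields Eq.~\eqref{eq:exceptionmissing}: $a$ is missing at $u$ and $\sigma(e_j)a$ is missing at $v_j$. These colors have equal magnitude, so $e'=e_j$ works, and the closing chain $S$ from Lemma~\ref{le:1} is not needed.
\end{enumerate}

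\emph{Main obstacle.} The delicate step is the structural analysis of $T$. We must check three things:
\begin{enumerate}
\item that $u$ is visited internally at most once;
\item that $P$ and $Q$ are nonempty, which follows because consecutive edges of $T$ alternate in magnitude between $|a|$ and $|b|$;
\item that segments avoiding $u$ keep their colors across $\gamma_0,\gamma_j,\gamma_\ell,\gamma_s$, since fan shifts only change edges incident with $u$.
\end{enumerate}
Each check parallels the corresponding check in Lemma~\ref{le:1}, with $e^*$ in place of $e_0$.
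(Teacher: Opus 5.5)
\textbf{Gap: an unjustified step.} The step that fails is ``Since $h^u_{e_\ell}$ carries $b$, the edge $e_j$ carries $-b$ at $u$.'' In Lemma~\ref{le:1} this was forced: there $r_0=-a$, so $e_0\neq e_j$ already carried $-a$ at $u$. Here the colors $c(e)$ are arbitrary, so nothing prevents $r_j=-a$. In other words, $e_j$ may itself be the edge $e^*$. You note that $e^*$ may be a fan edge, but you never exclude $e^*=e_j$. In that case neither of your two forms describes $T$. Moreover, the chain $K^{\gamma_\ell}_{c_j,\sigma(e_j)a}$ in form (a) is not even defined, since then $c_j=-\sigma(e_j)r_j=\sigma(e_j)a$.

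The repair is one line, and it is exactly the step the paper inserts. If $r_j=-a$, then by Lemma~\ref{le:fan-shift} the shifted coloring $\gamma_j$ has $a$ missing at $u$ and $c_j=\sigma(e_j)a$ missing at $v_j$. So $(e_j,\gamma_j)$ already satisfies the conclusion. A similar trivial case is skipped before the first chain. $T$ is defined only when $-\sigma(e_s)a$ is present at $v_s$; if it is missing, $(e_s,\gamma_s)$ works at once.

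\textbf{Comparison with the paper.} With these two additions your argument is correct, and it follows Lemma~\ref{le:1} more literally than the paper does. You split on whether $T$ contains a fan edge of index in $[\ell+1,s-1]$. In the negative case you swap $R=T-e_\ell$ in $\gamma_\ell$; in the positive case you treat the two forms; and you stop before the closing chain $S$.

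The paper instead lets $e_j$ be the \emph{first} fan edge on $T$ from $v_s$, after discarding $r_j=-a$. If $T$ enters $u$ along $e_j$, it swaps the initial segment in $\gamma_j$. Otherwise the edge preceding $e_j$ carries $-a$ at $u$ and is not a fan edge. Then $e_j$ and $e_\ell$ are the only fan edges on $T$, and the paper swaps the middle segment $Q$ in $\gamma_t$ with $t=\min\{j,\ell\}$. This handles $j<\ell$ and $j>\ell$ uniformly. Your version instead reuses Claims~\ref{cl:le1-later-edge} and~\ref{cl:le1-chain-order} essentially verbatim.
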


\begin{proof}
Suppose, to the contrary, that no such pair $(e',\psi)$ exists, and fix $a\in\miss{\gamma_0}{u}$.
By Eq.~\eqref{eq:assignmentcondition}, for each
$v\in N_G(u)$, choose pairwise distinct colors
$c(e)\in\miss{\gamma_0}{v}$ for the edges
$e\in E_G(u,v)$. Put $r(e)=-\sigma(e)c(e)$, and
let $F=(e_0,e_1,\ldots,e_s)$ be the resulting signed fan.
Put $b=r_s$, and shift $F$ to $e_s$, obtaining
the coloring $\gamma_s$. By Lemma~\ref{le:fan-shift},
$$
a\in\miss{\gamma_s}{u},\qquad
c_s=-\sigma(e_s)b\in\miss{\gamma_s}{v_s}.
$$
The color $b$ must be present at $u$, and both
$a$ and $-a$ must be present at $v_s$ under
$\gamma_s$. Otherwise, $u$ and $v_s$ would have
missing colors of the same magnitude, so
$(e_s,\gamma_s)$ would satisfy the conclusion, a contradiction.
Since $c_s$ is missing at $v_s$ and
$|c_s|=|b|$, it follows that $|a|\neq|b|$.

Consider the signed Kempe chain
$$
T=K_{c_s,-\sigma(e_s)a}^{\gamma_s}(v_s,w).
$$
Swapping $T$ applies $\pi_{a,b}$ and makes
$-\sigma(e_s)a$ missing at $v_s$. This swap must
make $a$ present at $u$, since otherwise the
resulting coloring, together with $e'=e_s$,
would satisfy the conclusion.
By Lemma~\ref{le:swap}, we have $w=u$, and the
last half-edge of $T$ has color $b$ under
$\gamma_s$. Lemma~\ref{le:fan-shift}(1) gives
a unique $\ell\in[0,s-1]$ with $r_\ell=b$.
Thus $T$ ends with $v_\ell e_\ell u$.

Let $e_j$ be the first fan edge on $T$ when
traversed from $v_s$. Then $j\in[0,s-1]$. Since $a$ is missing at $u$,
we have $\col{\gamma_s}{e_j}{u}\in\{b,-a,-b\}$. If $\col{\gamma_s}{e_j}{u}=-a$, then $r_j=-a$ and $c_j=\sigma(e_j)a$.
Shift $F$ to $e_j$, obtaining the coloring
$\gamma_j$. By Lemma~\ref{le:fan-shift}, $a\in\miss{\gamma_j}{u}$ and $c_j\in\miss{\gamma_j}{v_j}$, so $(e_j,\gamma_j)$ satisfies the conclusion, a contradiction.
Hence $\col{\gamma_s}{e_j}{u}\in\{b,-b\}$.

\begin{Claim}\label{cl:le2-first-edge-direction}
$T$ traverses $e_j$ from $u$ to $v_j$, and
$\col{\gamma_s}{e_j}{u}=-b$.
\end{Claim}

\begin{proof}[Proof of Claim~\ref{cl:le2-first-edge-direction}]
Suppose that $T$ traverses $e_j$ from $v_j$
to $u$, and let $P$ be the segment preceding
$e_j$. The edge $e_j$ has magnitude $|b|$, whereas the first edge of $T$
has magnitude $|a|$. Thus $P$ contains at least one edge.
As $e_j$ is the first fan edge on $T$, the
segment $P$ contains no fan edge.

Shift $F$ to $e_j$, obtaining the coloring
$\gamma_j$. The colors on $P$ are the same
under $\gamma_j$ and $\gamma_s$, and its first
half-edge has color $-\sigma(e_s)a$. Since
$\col{\gamma_s}{e_j}{v_j}=c_j$, its last
half-edge has color $\pi_{a,b}(c_j)$.
Under $\gamma_j$, the colors $c_s$ and $c_j$ are missing at
$v_s$ and $v_j$, respectively. These are precisely the images under
$\pi_{a,b}$ of the first and last half-edge colors of $P$.
Hence
$$
P=K_{c_s,-\sigma(e_s)a}^{\gamma_j}(v_s,v_j).
$$
By Lemma~\ref{le:swap}, swapping $P$ in
$\gamma_j$ gives a zero-free $k$-edge-coloring
$\psi$ of $\Sigma-e_j$ in which
$\pi_{a,b}(c_j)$ is missing at $v_j$.
The color $a$ remains missing at $u$, since $P$ contains no fan edge. Since $|\pi_{a,b}(c_j)|=|a|$, the pair $(e_j,\psi)$
satisfies the conclusion, a contradiction.

Therefore $T$ traverses $e_j$ from $u$ to
$v_j$. The color $b$ at $u$ occurs only on
the last half-edge of $T$, and so
$\col{\gamma_s}{e_j}{u}=-b$.
\end{proof}

By Claim~\ref{cl:le2-first-edge-direction},
the edge preceding $e_j$ along $T$
has color $-a$ at $u$ and is not a fan edge.
At $u$, the chain can use only the three half-edge colors
$b,-a,-b$, each of which occurs at most once. Since it ends with
$v_\ell e_\ell u$, its only fan edges are therefore $e_j$ and
$e_\ell$, with $j\neq\ell$.
Let $Q$ be the segment of $T$ between $e_j$ and $e_\ell$,
from $v_j$ to $v_\ell$. Then $Q$ contains no fan edge.
It contains at least one edge, since otherwise $e_j$ and $e_\ell$
would be consecutive edges of magnitude $|b|$.

Put $t=\min\{j,\ell\}$, and shift $F$ to
$e_t$, obtaining the coloring $\gamma_t$.
The colors on $Q$ are the same under
$\gamma_t$ and $\gamma_s$. Its first and
last half-edge colors are $\sigma(e_j)a$
and $-\sigma(e_\ell)a$, whose images
under $\pi_{a,b}$ are $c_j$ and $c_\ell$,
respectively. By Lemma~\ref{le:fan-shift},
these colors are missing at $v_j$ and
$v_\ell$ under $\gamma_t$. Hence
$$
Q=K_{c_j,\sigma(e_j)a}^{\gamma_t}(v_j,v_\ell).
$$
By Lemma~\ref{le:swap}, swapping $Q$ in
$\gamma_t$ gives a zero-free $k$-edge-coloring
$\psi$ of $\Sigma-e_t$ such that
$$
a\in\miss{\psi}{u},\qquad
\sigma(e_j)a\in\miss{\psi}{v_j},\qquad
-\sigma(e_\ell)a\in\miss{\psi}{v_\ell}.
$$
In either case, $u$ and $v_t$ have missing colors of magnitude
$|a|$. Thus $(e_t,\psi)$ satisfies the conclusion, a contradiction.
\end{proof}

Combining Lemmas~\ref{le:1} and~\ref{le:2}
gives the following corollary.

\begin{corollary}\label{co:extension}
Let $\Sigma=(G,\sigma)$ be a signed multigraph
with $e_0=uv_0\in E(G)$, and let $k$ be a
positive even integer. Suppose that $\Sigma-e_0$
has a zero-free $k$-edge-coloring and that
\begin{equation}\label{eq:localdegree}
k\geq d_G(u),\qquad
k\geq d_G(v)+\mu_G(u,v)\quad(v\in N_G(u)).
\end{equation}
Then $\Sigma$ has a zero-free $k$-edge-coloring.
\end{corollary}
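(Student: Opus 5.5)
We combine Lemma~\ref{le:2} and Lemma~\ref{le:1}. Let $\gamma_0$ be a zero-free $k$-edge-coloring of $\Sigma-e_0$. The plan is to use Eq.~\eqref{eq:localdegree} to check the hypotheses of Lemma~\ref{le:2}. That lemma then moves the uncolored edge to some $e'=uv'$, and Lemma~\ref{le:1} is applied with $e'$ in place of $e_0$.

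\emph{Step 1: hypotheses for $\gamma_0$.} In $\Sigma-e_0$ the vertex $u$ has degree $d_G(u)-1\le k-1<k=|M_k|$, so $\miss{\gamma_0}{u}\neq\emptyset$.

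For $v\in N_G(u)$, the number of colored edges at $v$ is $d_G(v)-1$ if $v=v_0$ and $d_G(v)$ otherwise. Hence
$$|\miss{\gamma_0}{v}|\ge k-d_G(v)\ge \mu_G(u,v),$$
which gives Eq.~\eqref{eq:assignmentcondition}.

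\emph{Step 2: apply Lemma~\ref{le:2}.} This yields an edge $e'=uv'$ and a zero-free $k$-edge-coloring $\psi$ of $\Sigma-e'$ in which $u$ and $v'$ have missing colors $a,d$ with $|a|=|d|$.

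\emph{Step 3: apply Lemma~\ref{le:1}.} We apply it to the edge $e'$ with the coloring $\psi$. This requires Eq.~\eqref{eq:assignmentcondition} for $\psi$, which holds by the same count as in Step 1. The count uses only that $\psi$ colors every edge except one edge at $u$, so each $v\in N_G(u)$ has at most $d_G(v)$ colored incident edges, and hence $|\miss{\psi}{v}|\ge k-d_G(v)\ge\mu_G(u,v)$. Lemma~\ref{le:1} with the colors $a,d$ then gives a zero-free $k$-edge-coloring of $\Sigma$.

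No step is a real obstacle; the only care needed is re-verifying Eq.~\eqref{eq:assignmentcondition} for the new coloring $\psi$. This works because that condition depends only on the degree bound in Eq.~\eqref{eq:localdegree} and on the uncolored edge being incident with $u$, not on which such edge it is.
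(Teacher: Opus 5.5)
Your proposal is correct and follows essentially the same route as the paper. You use the degree count from Eq.~\eqref{eq:localdegree} to get $\miss{\gamma_0}{u}\neq\emptyset$ and Eq.~\eqref{eq:assignmentcondition}, and then apply Lemma~\ref{le:2} to obtain $(e',\psi)$. You re-verify Eq.~\eqref{eq:assignmentcondition} for $\psi$ using only that the uncolored edge is incident with $u$, and you finish with Lemma~\ref{le:1}, exactly as the paper does.
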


\begin{proof}
Let $\gamma_0$ be a zero-free $k$-edge-coloring
of $\Sigma-e_0$. For every edge $e$ incident
with $u$ and every zero-free $k$-edge-coloring
$\gamma$ of $\Sigma-e$, Eq.~\eqref{eq:localdegree}
gives
$$
|\miss{\gamma}{u}|=k-d_G(u)+1\geq1,
\qquad
|\miss{\gamma}{v}|\geq k-d_G(v)\geq\mu_G(u,v)
\quad(v\in N_G(u)).
$$

Applying Lemma~\ref{le:2} to $\gamma_0$ gives
an edge $e'=uv'$ and a zero-free
$k$-edge-coloring $\psi$ of $\Sigma-e'$
such that $u$ and $v'$ have missing colors
of the same magnitude. The preceding inequalities show that
$\psi$ also satisfies Eq.~\eqref{eq:assignmentcondition}.
Applying Lemma~\ref{le:1} therefore gives a zero-free
$k$-edge-coloring of $\Sigma$.
\end{proof}

\section{Proof of Theorem~\ref{th:main}}\label{se:mainproof}

We first establish two bounds for zero-free edge colorings and then
use them to prove Theorem~\ref{th:main}.
For a multigraph $G$, write
$$
q(G)=2\left\lceil\frac{\Delta(G)+\mu(G)}2\right\rceil,
\qquad
V_\Delta(G)=\{v\in V(G): d_G(v)=\Delta(G)\}.
$$

\Needspace{6\baselineskip}
\begin{lemma}\label{le:evenbound}
Every signed multigraph $\Sigma=(G,\sigma)$ has a zero-free
$q(G)$-edge-coloring.
\end{lemma}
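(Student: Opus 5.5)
We will prove the statement by induction on $|E(G)|$, adding edges one at a time and invoking Corollary~\ref{co:extension} at each step. If $E(G)=\emptyset$, the empty map is a zero-free $q(G)$-edge-coloring. Note that $q(G)$ is even and positive whenever $E(G)\neq\emptyset$, since then $\Delta(G)+\mu(G)\geq2$. So Corollary~\ref{co:extension} applies with $k=q(G)$.

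The inductive step runs as follows. Choose any edge $e_0=uv_0$ and put $\Sigma'=\Sigma-e_0$ with underlying graph $G'=G-e_0$. Since $\Delta(G')\leq\Delta(G)$ and $\mu(G')\leq\mu(G)$, we have $q(G')\leq q(G)$. By induction, $\Sigma'$ has a zero-free $q(G')$-edge-coloring. Because $M_{q(G')}\subseteq M_{q(G)}$ (both are even), this is also a zero-free $q(G)$-edge-coloring of $\Sigma-e_0$.

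It remains to verify Eq.~\eqref{eq:localdegree} with $k=q(G)$. First, $k\geq\Delta(G)+\mu(G)\geq d_G(u)$. Second, for every $v\in N_G(u)$,
\[
d_G(v)+\mu_G(u,v)\leq\Delta(G)+\mu(G)\leq q(G).
\]
Corollary~\ref{co:extension} then gives a zero-free $q(G)$-edge-coloring of $\Sigma$, which completes the induction.

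There is no real obstacle: all the recoloring work sits inside Corollary~\ref{co:extension}. The two points to check are these. The inductive hypothesis is stated for $q(G')$, so we need the monotonicity $q(G')\leq q(G)$ and the inclusion $M_{q(G')}\subseteq M_{q(G)}$ of even palettes. Also, the corollary's hypothesis requires $k$ to be positive, which holds whenever $G$ has an edge.

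Equivalently, one can fix $k=q(G)$ for the whole argument and add the edges of $G$ to an edgeless spanning subgraph one by one. Each intermediate graph $H$ satisfies $d_H(v)+\mu_H(u,v)\leq k$, so the corollary applies at every step without comparing palettes.
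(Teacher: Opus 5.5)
Your proof is correct and follows the paper's argument essentially verbatim: induction on $|E(G)|$, viewing the zero-free $q(G-e_0)$-edge-coloring as a zero-free $q(G)$-edge-coloring via $M_{q(G-e_0)}\subseteq M_{q(G)}$, and then applying Corollary~\ref{co:extension} after checking Eq.~\eqref{eq:localdegree} with $k=q(G)$. Your explicit check that $q(G)\geq 2$ whenever $G$ has an edge, so that the corollary's positivity hypothesis on $k$ holds, is a detail the paper leaves implicit.
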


\begin{proof}
We argue by induction on $|E(G)|$. The assertion is immediate when
$E(G)=\emptyset$. Suppose that $E(G)\neq\emptyset$, and choose
an edge $e_0=uv_0$. By induction, $\Sigma-e_0$
has a zero-free $q(G-e_0)$-edge-coloring. Since both $q(G-e_0)$ and $q(G)$ are even and
$q(G-e_0)\leq q(G)$, we have $M_{q(G-e_0)}\subseteq M_{q(G)}$.
Thus this is also a zero-free $q(G)$-edge-coloring. Note that $q(G)\geq d_G(u)$ and for every $v\in N_G(u)$,
$$
q(G)\geq\Delta(G)+\mu(G)\geq d_G(v)+\mu_G(u,v).
$$
By Corollary~\ref{co:extension},
$\Sigma$ has a zero-free $q(G)$-edge-coloring.
\end{proof}

\begin{lemma}\label{le:independent}
Let $\Sigma=(G,\sigma)$ be a signed multigraph with at least
one edge. If $V_\Delta(G)$ is independent and
$k=\Delta(G)+\mu(G)-1$ is even, then $\Sigma$ has a
zero-free $k$-edge-coloring.
\end{lemma}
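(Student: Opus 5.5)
We prove Lemma~\ref{le:independent} by induction on $|E(G)|$, in the same style as the proof of Lemma~\ref{le:evenbound}, with Corollary~\ref{co:extension} as the extension tool. The point of the independence hypothesis is this. Each edge of $G$ has at least one end of degree at most $\Delta(G)-1$. If we take such an end as the hinge $u$, the extra room at $u$ compensates for the missing color in $k=\Delta(G)+\mu(G)-1$.

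Concretely, fix any edge and write it as $e_0=uv_0$, where $u\notin V_\Delta(G)$; this is possible because $V_\Delta(G)$ is independent. Then the conditions in Eq.~\eqref{eq:localdegree} are checked as follows.
\begin{itemize}
\item At the hinge, $d_G(u)\le\Delta(G)-1\le k$, since $\mu(G)\ge1$.
\item For a neighbor $v\in N_G(u)$ with $d_G(v)\le\Delta(G)-1$, we have $d_G(v)+\mu_G(u,v)\le\Delta(G)-1+\mu(G)=k$.
\item For a neighbor $v\in N_G(u)$ with $d_G(v)=\Delta(G)$, the bound $d_G(v)+\mu_G(u,v)\le k$ holds only when $\mu_G(u,v)\le\mu(G)-1$.
\end{itemize}
The third case is the main obstacle: a $\Delta$-vertex joined to the hinge by exactly $\mu(G)$ parallel edges.

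To avoid that obstacle I would strengthen the choice of $e_0$. Choose $e_0=uv_0$ with $u\notin V_\Delta(G)$ so that no $\Delta$-vertex is joined to $u$ by $\mu(G)$ edges. If no such $u$ exists, fall back on a direct count. In Lemma~\ref{le:2} and Lemma~\ref{le:1} the hypothesis actually used is Eq.~\eqref{eq:assignmentcondition}, namely $|\miss{\gamma}{v}|\ge\mu_G(u,v)$ for a coloring $\gamma$ of $\Sigma-e$ with $e$ incident with $u$. If $e\in E_G(u,v)$, then $v$ has only $d_G(v)-1$ colored edges, so $|\miss{\gamma}{v}|\ge k-d_G(v)+1$, which equals $\mu(G)$ for a $\Delta$-vertex. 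Otherwise the count gives only $\mu(G)-1$, and this is exactly the shortfall to be handled.

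I would handle the shortfall by re-proving Corollary~\ref{co:extension} under a weakened hypothesis. The fan construction requires distinct missing colors at $v$ only for those edges of $E_G(u,v)$ that are actually colored. The fan of Definition~\ref{df:fan} contains at most the colored edges together with the one uncolored edge, and the uncolored edge needs no $c(e)$ at its own end except when it is $e_0$. So I would check carefully whether Eq.~\eqref{eq:assignmentcondition} can be replaced by the requirement that $|\miss{\gamma}{v}|$ be at least the number of colored edges in $E_G(u,v)$, plus one if $v=v_0$. This weaker count does hold for every $v$. Verifying that this relaxed assignment condition survives all the shifts and swaps in Lemmas~\ref{le:1} and~\ref{le:2} is the step I expect to need the most care. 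If it fails, I would instead delete all of $E_G(u,v_0)$ at once and re-add those edges one at a time in a suitable order.

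Finally, the induction needs care because $\Delta$ and $\mu$ may drop after deleting $e_0$, and the independence hypothesis may fail for $G-e_0$. So I would induct on edges removed from a fixed $G$. That is, I would color successive subgraphs $G'\subseteq G$ with the fixed $k$ and apply the extension step in $G$ itself; the degree bounds above only concern $G$. The base case $E=\emptyset$ is trivial, and $k\ge\Delta(G)\ge1$ is even, hence positive.
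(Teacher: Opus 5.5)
Your proposal has a genuine gap: the hinge is at the wrong end of the edge. In Eq.~\eqref{eq:localdegree} the hinge only has to satisfy $k\ge d_G(u)$, and this already holds for $d_G(u)=\Delta(G)$ because $\mu(G)\ge1$. The slack is needed at the \emph{neighbors} of the hinge. Taking $u\notin V_\Delta(G)$ creates exactly the bad case you identify, a vertex of $V_\Delta(G)$ joined to $u$ by $\mu(G)$ edges, and none of your patches removes it.

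The strengthened choice of $u$ need not exist. Take two copies of $K_{1,4}$ with centers $x,x'$ and leaves $u_i,u_i'$, and add the edges $u_iu_i'$. Then $\Delta=4$, $\mu=1$, $k=4$, $V_\Delta=\{x,x'\}$ is independent, and every other vertex is adjacent to a $\Delta$-vertex.

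Your relaxed assignment condition is not weaker than Eq.~\eqref{eq:assignmentcondition}. For $v\ne v_0$, all $\mu_G(u,v)$ edges of $E_G(u,v)$ are colored. For $v=v_0$, you get $(\mu_G(u,v_0)-1)+1$. So the two conditions coincide. Your claim that it ``does hold for every $v$'' contradicts the count $\mu(G)-1$ you computed one sentence earlier.

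Moreover, Corollary~\ref{co:extension} needs the condition not only for $\gamma_0$ but also for the coloring $\psi$ of $\Sigma-e'$ returned by Lemma~\ref{le:2}, and $e'$ can be any edge at $u$. In the example, with hinge $u_1$ and $e'=u_1u_1'$, the vertex $x$ has no missing color, yet the colored edge $u_1x$ needs one. Deleting $E_G(u,v_0)$ and re-adding its edges one at a time fails for the same reason. Whenever the uncolored edge at $u$ does not join $u$ to the deficient $\Delta$-neighbor, that neighbor is one color short.

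The missing idea is to take the hinge \emph{in} $V_\Delta(G)$, which is exactly what independence is for. If $u\in V_\Delta(G)$, then $d_G(u)=\Delta\le\Delta+\mu-1=k$. Every $v\in N_G(u)$ has $d_G(v)\le\Delta-1$, so $d_G(v)+\mu_G(u,v)\le k$. Thus Eq.~\eqref{eq:localdegree} holds with no exceptional case.

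The paper then inducts on $|V_\Delta(G)|$. With $H=G-e_0$, there are two cases. Either $\Delta(H)+\mu(H)\le k$, so $q(H)\le k$ and Lemma~\ref{le:evenbound} colors $\Sigma-e_0$. Or $\Delta(H)=\Delta$ and $\mu(H)=\mu$, and $V_\Delta(H)=V_\Delta(G)\setminus\{u\}$ is independent and smaller.

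Your fixed-$G$ scheme also works once the hinge is moved. First color $G$ minus all edges at $V_\Delta(G)$ by Lemma~\ref{le:evenbound}; its maximum degree is at most $\Delta-1$, so $q\le k$. Then add those edges back one at a time with the $\Delta$-end as hinge. Degrees and multiplicities in the current subgraph are bounded by those in $G$, so Eq.~\eqref{eq:localdegree} holds at each step.
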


\begin{proof}
We argue by induction on $|V_\Delta(G)|$. Write
$\Delta=\Delta(G)$ and $\mu=\mu(G)$. Choose $u\in V_\Delta(G)$
and an edge $e_0=uv_0$, and put $H=G-e_0$. Since $V_\Delta(G)$ is
independent, every neighbor of $u$ has degree at most
$\Delta-1$.

If $\Delta(H)+\mu(H)\leq k$, then $q(H)\leq k$ because
$k$ is even. By Lemma~\ref{le:evenbound}, $\Sigma-e_0$ has a
zero-free $k$-edge-coloring. This case
includes the induction base $|V_\Delta(G)|=1$, for which
$\Delta(H)=\Delta-1$.

If $\Delta(H)+\mu(H)>k$, then the inequalities
$\Delta(H)\leq\Delta$ and $\mu(H)\leq\mu$, together with
$k=\Delta+\mu-1$, imply that $\Delta(H)=\Delta$ and $\mu(H)=\mu$.
Moreover, $V_\Delta(H)=V_\Delta(G)\setminus\{u\}$ is independent
and has fewer vertices than $V_\Delta(G)$.
The induction hypothesis therefore gives a zero-free $k$-edge-coloring
of $\Sigma-e_0$.

In either case, $k=\Delta+\mu-1\geq\Delta=d_G(u)$,
and for every $v\in N_G(u)$, $d_G(v)+\mu_G(u,v)\leq\Delta-1+\mu=k$.
Corollary~\ref{co:extension} therefore gives a
zero-free $k$-edge-coloring of $\Sigma$.
\end{proof}

We can now prove the main theorem.

\begin{proof}[Proof of Theorem~\ref{th:main}]
The assertion is immediate when $E(G)=\emptyset$.
We may assume that $G$ has at least one
edge. Write $\Delta=\Delta(G)$ and $\mu=\mu(G)$.
If $\Delta+\mu$ is even, the theorem follows from
Lemma~\ref{le:evenbound}. 
It remains to consider the case where $\Delta+\mu$ is odd.
Set $k=\Delta+\mu-1$, which is even.

Choose a maximal matching $N$ in the
subgraph of $G$ induced by $V_\Delta(G)$, and put $H=G-N$.
Since $\Delta(H)\leq\Delta$ and $\mu(H)\leq\mu$,
either $\Delta(H)+\mu(H)\leq k$ or
$\Delta(H)=\Delta$ and $\mu(H)=\mu$.
In the first case, $q(H)\leq k$ because $k$ is even, so
Lemma~\ref{le:evenbound} gives a zero-free $k$-edge-coloring $\gamma$
of $\Sigma-N$. In the second case, $V_\Delta(H)$ consists precisely
of the vertices of $V_\Delta(G)$ not covered by $N$.
These vertices form an independent set by the maximality of $N$. Lemma~\ref{le:independent} therefore gives a zero-free
$k$-edge-coloring $\gamma$ of $\Sigma-N$.

Recall that $M_{k+1}=M_k\cup\{0\}$. Assign color $0$ to both half-edges of every edge in $N$.
 Since $N$ is a matching and $\gamma$ uses no zero color, the resulting coloring is a $(k+1)$-edge-coloring of $\Sigma$. Therefore,
$\chi'(\Sigma)\leq k+1=\Delta+\mu$.
\end{proof}


\end{document}